\DeclareMathOperator{\const}{const}
\begin{document}

\renewcommand{\textfraction}{-.2}
\renewcommand{\topfraction}{1.0}

\makeatletter
\def\ps@pprintTitle{%
     \let\@oddhead\@empty
     \let\@evenhead\@empty
     \def\@oddfoot{\footnotesize\itshape
       Preprint submitted to \ifx\@journal\@empty Elsevier
       \else\@journal\fi\hfill\today}%
     \let\@evenfoot\@oddfoot}
\makeatother
\newtheorem{prop}{Proposition}
\newtheorem*{thm}{Theorem}

\theoremstyle{definition}
\newtheorem{defn}{Definition}

\newsavebox{\TmpBox}
\newcommand{\tmp}{}
\newlength{\tmplength}                    %

%
\newcommand{\Equa}[2]{\begin{equation}#2\label{#1}\end{equation}}
\newcommand{\equa}[1]{\[ #1 \]}

\newcommand{\refeq}[1]{{\rm(\ref{#1})}}
\newcommand{\refeqeq}[2]{{\rm(\ref{#1},\ref{#2})}}
%
%
\newcommand{\myfrac}[2]{{\ifmmode{}^{#1}\!/_{\!#2}\else${}^{#1}\!/_{\!#2}$\fi}}
%
%
\newcommand{\abs}[1]{\left\lvert#1\right\rvert}
\renewcommand{\Re}[1]{\mathop{\rm Re}\nolimits\,(#1)}
\renewcommand{\Im}[1]{\mathop{\rm Im}\nolimits\,(#1)}
\newcommand{\sign}{\mathop{\rm sign}\nolimits}
\newcommand{\Deg}[1]{{\ifmmode{#1}^\circ\else${#1}^\circ$\fi}}
\newcommand{\e}{{\mathrm{e}}}
\newcommand{\Exp}[1]{\e^{#1}}
%
\newcommand{\iu}{{\mathrm{i}}}
%
\newcommand{\diff}[1]{{\mathrm d}#1}
\newcommand{\D}[2]{#1_{#2}^{\prime}}
\newcommand{\DD}[2]{#1_{#2}^{\prime\prime}}
\newcommand{\Int}[4]{\displaystyle\int\limits_{#1}^{#2}{#3}\,{\mathrm d}#4}
\newcommand{\IntF}[5]{\displaystyle\int\limits_{#1}^{#2}\dfrac{#3\,{\mathrm d}#5}{#4}}
\newcommand{\Dfrac}[2]{\dfrac{{\mathrm d}#1}{{\rm d}#2}}
\newcommand{\Pd}[2]{\dfrac{\partial#1}{\partial#2}}
\newcommand{\Arc}[1]{\displaystyle{\buildrel\,\,\frown\over{#1}}}
\newcommand{\kA}{k_{_A}}
\newcommand{\kB}{k_{_B}}

\newcommand{\ieq}{\,{=}\,}
\newcommand{\ineq}{\,{\not=}\,}
\newcommand{\ilt}{\,{<}\,}
\newcommand{\igt}{\,{>}\,}
\newcommand{\In}{\,{\in}\,}
\renewcommand{\le}{\leqslant}
\newcommand{\ile}{\,{\le}\,}
\renewcommand{\ge}{\geqslant}
\newcommand{\ige}{\,{\ge}\,}

\newcommand{\Eqref}[1]{\stackrel{\refeq{#1}}{=}}
\newcommand{\eqdef}{\stackrel{def}{=}}
%
%
%
\newcommand{\GT}[1]{$#1\igt0$}
\newcommand{\GE}[1]{$#1\ige0$}
\newcommand{\LT}[1]{$#1\ilt0$}
\newcommand{\LE}[1]{$#1\ile0$}
\newcommand{\EQ}[1]{$#1\ieq0$}
\newcommand{\NE}[1]{$#1\ineq0$}
%
\newcommand{\Vect}[1]{\overrightarrow{#1}}
\newcommand{\HM}{\hphantom{{-}}}

%
\newcommand{\acum}{\widetilde\alpha}
\newcommand{\bcum}{\widetilde\beta}

%
\newcommand{\Kls}[1]{{\ifmmode{\cal K}^{\star}_{#1}\else${\cal K}^{\star}_{#1}$\fi}}
\newcommand{\Kl}[1]{{\ifmmode{\cal K}_{#1}\else${\cal K}_{#1}$\fi}}
\newcommand{\Kr}[1]{K(#1)}

%
%
\def\FigDir{}

\newcommand{\Figref}[1]{\ref{F#1}}
\newcommand{\Reffig}[2][]{fig.\;\Figref{#2}\textit{#1}} 
\newcommand{\RefFig}[2][]{Fig.\;\Figref{#2}\textit{#1}} 

\newcommand{\Infigw}[2]{
\includegraphics[width=#1]{\FigDir#2.eps}}

\newcommand{\Infig}[3]{\Infigw{#1}{#2}\caption{#3}\label{F#2}}


%
%
\newcommand{\Lfig}[2]{
\begin{figure}[#1]
\settowidth{\tmplength}{Fig.~9.99.}%
\parbox[b]{\tmplength}{\caption{}\label{F#2}~ ~ ~ ~ ~}%
\addtolength{\tmplength}{-\textwidth}%
\includegraphics[width=-\tmplength]{\FigDir#2.eps}
\vspace{3mm}
\end{figure}
}
\newsavebox{\CapBox}


\newcommand{\Hfig}[2]{
\settowidth{\tmplength}{Fig.999.9}%
\savebox{\CapBox}{\parbox{\tmplength}{\caption{}\label{F#2}}}%
\addtolength{\tmplength}{3mm}%
\raisebox{5pt}{\parbox[b]{\tmplength}{\usebox{\CapBox}}}%
\addtolength{\tmplength}{-\textwidth}%
\setlength{\tmplength}{-\tmplength}%
\addtolength{\tmplength}{-#1}%
\includegraphics[width=\tmplength]{\FigDir#2.eps}
}

\newcommand{\Bfig}[3]{
\parbox[b]{#1}{\Infig{#1}{#2}{#3}}
}

\newcommand{\Ffig}[3]{
   \begin{figure}[#1]%
      \Infig{\textwidth}{#2}{#3}%
   \end{figure}%
}

\newcommand{\Wfigr}[4]{
\par
\setlength{\tmplength}{#2}%
\addtolength{\tmplength}{2pt}%
\begin{wrapfigure}[#1]{r}{\tmplength}%
\Infig{#2}{#3}{#4}%
\end{wrapfigure}%
}


\newcommand{\quo}[1]{``#1''}  

\author{A.~Kurnosenko}
\ead{Alexey.Kurnosenko@cern.ch}
\address{Moscow Engineering Physics Institute (CERN--MEPhI collaboration)}

\title{Applying inversion to construct rational spiral curves}


\begin{abstract}
A method is proposed to construct spiral curves by inversion 
of a spiral arc of parabola. The resulting curve is rational of 4-th order.
Proper selection of the parabolic arc and parameters of inversion
allows to match a wide range of boundary conditions, namely,
tangents and curvatures at  the endpoints, 
including those, assuming inflection.
\\~~\\
{\bf Keywords:}\,
spiral, transition curve, rational curve, inversion.\\
~~\\
{\bf 2000 MSC:} 53A04.
\end{abstract}

\maketitle{}

\section{Introduction}

Constructing curves with monotone curvature function (spirals)
is a well-known problem in CAD applications. 
One application of finding spiral arcs with predefined
boundary conditions is the design of transitions curves,
joining two given curves in G$^2$-continuous manner.

Another application was proposed in~\cite{IntProp}. 
For a spiral, represented by a set of interpolation nodes
with boundary tangents, a region, 
covering all possible instances of spiral splines, was constructed. 
The width of the region allows to estimate the determiness of a curve
by the given point set,
and allows to accord curve design with the requirements,
imposed by manufacturing tolerances.

The recent approaches to the problem
include approximate solutions for small arcs, solutions for specific 
boundary conditions, attempts to select
spiral segments within traditional polynomial or rational curves.
An exact solution was given in \cite{Hermite} in terms of
Cornu spiral, extended by two circular arcs.

The author's contribution to the subject is the study of spirals
as a special class of planar curves \cite{InvInvEn,PomiMain,PomiShort}.
The main results of this study are the necessary and sufficient conditions
of existence of a spiral with given curvature elements at the endpoints. 
They expand some statemets of
W.~Vogt and A.~Ostrowski, well known from~\cite{Guggen},
onto non-conex spiral arcs, i.e. those with inflection
or multiple winding. The requirement of curvature continuity
was also removed, which is the case of circular splines 
with piecewise constant curvature.

Based on this study,
a construction in terms of rational curves was developed and
is being presented in this note.
The construction is straightforward and does not require
any  heuristic optimizational or fitting procedures. 
A wide range of boundary conditions is covered.
A possible development of the method to satisfy any boundary conditions, 
compatible with spirality, is discussed in the last section.

\begin{figure}[t]
\centering%
\Bfig{.9\textwidth}{ParabDemo}{~}
\end{figure}

\subsection{Overview of the method}

The proposed method is illustrated by \RefFig{ParabDemo}.
Suppose, we have to construct
a spiral arc from the point $A_1\ieq(-1,0)$ to $B_1\ieq(1,0)$. 
Required  parameters 
are shown as tangent vectors and the circles of curvature (dashed)
at the startpoint~$A_1$, marked by~$\alpha$ and~$\kA$,
and,   at~the endpoint~$B_1$, by  $\beta$ and~$\kB$.

First, we construct a specially calculated parabolic arc, shown by dotted line, 
inscribed into its control polygon $A_1P_1B_1$.
This is a second order Bezi{\`e}r curve
\Equa{OrigPar}{%
    \left(x(t),y(t)\right)=
    A_1(1{-}t)^2+2P_1(1{-}t)t+B_1t^2,\qquad 0\le t \le 1,
}
with the control point $P_1\ieq(p,q)$.
At first glance, this arc has nothing in common with 
the required curve. The only visible feature is that
it is a spiral: the vertex of parabola, although close to the point $A_1$,
is outside the arc $A_1B_1$, $t_{vx}\not\in(0,1)$.

Second, we apply linear  fractional transformation 
(also known as M\"obius transformation)
\Equa{z2w0}{%
        W(z;z_0) = \dfrac{z+z_0}{1+z_0 z},
        \quad z_0= x_0{+}\iu y_0\not=\pm1,  
}
where $z\ieq x(t){+}\iu y(t)$ is a point on the parabola,
$z_0$~is a complex constant. 
The image of the parabolic arc is the sought for curve.
In this example we have calculated 
$p\simeq{ -0.8845}$, $q\simeq{-0.3033}$, $x_0\simeq 1.0296$, 
$y_0\simeq -0.6727$
 \ for prescribed values $\alpha\ieq{-\Deg{180}}$, $\kA\ieq 2.5$,
$\beta\ieq \Deg{120}$ and $\kB\ieq 0.5$.

The solution is based on the following features:

\begin{itemize}
\item we can vary 4 free parameters, $\{p,q,x_0,y_0\}$,
to satisfy 4 required values $\{\alpha,\beta,\kA,\kB\}$;

\item  M\"obius transformation preserves monotonicity
of curvature; if $(p,q)$ defines a spiral parabolic arc
(without the vertex inside), the transformed curve is a spiral;

\item original curve \eqref{OrigPar} being 2nd order polynomial,
the image is 4th order rational.
\end{itemize}

It is well known \cite{Markushevich}
that transformation~\eqref{z2w0} includes
movements, homothety, symmetry and inversion. The latter
provides the necessary flexibility in modifying the
form of a curve. In \RefFig[b]{ParabDemo}
a curve with inflection was requested ($\alpha\ieq\beta\ieq{-\Deg{40}}$,
$3\ieq\kA \igt 0 \igt\kB\ieq {-2}$)
and constructed by similar deformation of the parabola with the control point~$P_2$.

\begin{figure}[t]
\centering%
\Bfig{.9\textwidth}{4curves}{~}
\end{figure}

\section{Definitions and notation}

An arc of a curve is described by the functions of the parameter~$t$:
\equa{%
   x(t),\quad y(t),\quad z(t)\ieq x(t)+\iu y(t),\quad \tau(t),\quad k(t),
   \quad 0\ile t \ile 1,
}
$\tau$ and $k$ being the angle of the tangent vector and the curvature
at the point $(x,y)$.
In this article we deal with curves whose function $k(t)$
is monotone on the segment $t\in[0,1]$, i.e. with spiral arcs. 

\begin{defn}
\label{DefShort1}
A spiral arc $\Arc{AB}$ is {\em short}, 
if its tangent vector never achieves
the direction $\displaystyle{\buildrel\,\,\to\over{BA}}$,
opposite to the direction of its chord,
except, possibly, at the endpoints.
\end{defn}

Another definition, namely, that ``a spiral arc is short, 
if it does not intersect the complement of the chord
to the infinite straight line (possibly intersecting the chord itself)'',
is equivalent to Def.\,\ref{DefShort1} \cite[lemma\,4]{PomiMain}.
Four spiral arcs $A_iM_iB_i$, $i\ieq 1,2,3,4$, in \RefFig{4curves} are short.
In this article we consider short spirals only, which is sufficient
for most of CAD applications.
Examples of long ones are:
\setlength{\unitlength}{1pt}%
\begin{picture}(42,10)(0,3)\put(0,0){\Infigw{42pt}{LongExa1}}\end{picture},
\begin{picture}(42,10)(0,3)\put(0,0){\Infigw{42pt}{LongExa2}}\end{picture},
\begin{picture}(32,10)(0,3)\put(0,0){\Infigw{32pt}{LongExa3}}\end{picture}.

We denote below a circle of curvature as a quadruple of coordinates 
of a fixed point, tangent and signed curvature at this point:
\equa{%
   \Kl{i}=K(x_i,y_i,\tau_i,k_i).
}
It may be a straight line, if \EQ{k_i}.

To consider properties of the arc $\Arc{AB}$ with respect to the chord 
$\Vect{AB}$ of the length $|AB|\ieq 2c$,
we choose the coordinate system such that the chord
becomes the segment $[-c,c]$ of the $x$-axis. 
With 
$\alpha\ieq\tau(0)$, $\kA\ieq k(0)$, and 
$\beta\ieq\tau(1)$, $\kB\ieq k(1)$,
the boundary circles of curvature take form
\Equa{K1K2cc}{%
  \Kl{1} = K(-c,0,\alpha,\kA),\qquad
  \Kl{2} = K(c,0,\beta,\kB).
}
It is convenient to assume homothety
with the coefficient $c^{-1}$, and to operate
on the segment $[-1,1]$ of the $x$-axis. 
The coordinates $x,y$, arc length~$s$, and curvature~$k$ become
normalized dimensionless quantities, corresponding to $x/c$, $y/c$, $s/c$ 
and $kc$.
With such homothety applied, 
boundary circles~\refeq{K1K2cc} appear as~\refeq{K1K2c1}.
\begin{defn}
A spiral arc with boundary curvature elements
\Equa{K1K2c1}{%
  \Kl{1} = K(-1,0,\alpha,a),\qquad
  \Kl{2} = K(1,0,\beta,b),
       \qquad   a = c\,\kA,\quad  b = c\,\kB\,.
}
is said to be in \textit{normalized} position.
 The product~$kc$, invariant under homotheties,
will be referred to as \textit{normalized curvature}.
\end{defn}

For each curve $A_iM_iB_i$ in \RefFig{4curves}
two circular arcs are  traced from~$A_i$ to~$B_i$.
They share tangent with the spiral at one of the endpoints.
These two arcs form a {\em lense}, enclosing the spiral.
The angular width of the lense (signed) is 
\equa{%
    \sigma=\alpha+\beta.
}

Four points $M_i$  in  \RefFig{4curves}, 
four arcs $A_iM_iB_i$ (the first one is parabola) and four lenses are images of
each other under transformation~\eqref{z2w0}. 
The angle~$\sigma$ remains constant for the four lenses.

\section{Theoretical background}

The following is valid for spirals:
\def\labelenumi{(\theenumi)}        
\def\theenumi{\roman{enumi}}
\begin{enumerate}
\item \label{TB:Inversion}
Monotonicity of curvature is preserved under inversion,
increasing curvature being transformed into decreasing, and vice versa 
\cite[theorem\,1]{InvInvEn}.

\item \label{TB:Qlt0}
The necessary and sufficient condition for the existence of a non-biarc spiral
with boundary conditions \eqref{K1K2cc}
is inequality
\Equa{Qlt0}{%
   Q(\Kl{1},\Kl{2}) = 
     (\kA c+\sin\alpha)(\kB c-\sin\beta)+\sin^2\dfrac{\alpha+\beta}{2}<0
}
\hbox{\cite[th.\,2]{PomiMain}}.
If \EQ{Q}, the biarc is the unique spiral, matching these boundary
parameters.

\item \label{TB:Vogt}
For the existence of a short spiral, it is required additionally that 
\equa{
     \sign(\alpha+\beta)=\sign (\kB-\kA)\not= 0,
} 
\cite[th.\,1]{PomiShort}%
\footnote{%
Articles \cite{PomiMain,PomiShort} are not yet translated in English.
The interested reader may find
English version of all these facts and proofs in
arXiv:math/0601440v2 ({\em A.~Kurnosenko. Around Vogt's theorem}\,). 
},
or, in details,
\Equa{VogtShort}{%
  \begin{array}{llc}
  \mbox{if~~}\kA <  \kB:\qquad& \alpha{+}\beta>0,\quad &
               -\pi < \alpha,\beta \le \pi; \\
  \mbox{if~~}\kA >  \kB:\qquad& \alpha{+}\beta<0,\quad & 
               -\pi \le \alpha,\beta < \pi.
  \end{array}
}

\item \label{TB:Invariance}
Both $Q$ and $\sigma\ieq\alpha{+}\beta$ are invariant under M\"obius transformations.

\item \label{TB:Lense}
Short spiral is enclosed in its lense \cite[theorem\,2]{PomiShort}.
\end{enumerate}

Below we provide some comments to these statements.

Alternative proof of \refeq{TB:Inversion}
could be easily derived from the well known facts that
(a)~the curve in the vicinity of some point intersects its circle of curvature 
if the curvature is monotone,
and (b)~the curve is located from one side of this circle, 
if the point corresponds to vertex. These local properties are evidently 
invariant under inversions, and no new vertices can appear on the transformed curve.
\smallskip

In \refeq{TB:Qlt0} and \refeq{Qlt0} $Q=\sin^2\frac{\psi}{2}$, 
where $\psi$ is the intersection angle
of the two circles of curvature. \LT{Q} means that~$\psi$
is pure imaginary.
 \LT{Q} can be interpreted as follows:
if two given circles are inverted into a concentric pair, 
taking into account their orientation,
the resulting circles will be parallel 
(and not {\em anti}\,parallel, as it will be the case if $Q\igt 1$).
\EQ{Q} involves \EQ{\psi}, and corresponds to tangency of two circles
(biarc curve can be constructed). 
$0\ilt Q\ilt 1$ means their real intersection, 
which is not  compatible with spirality.
Neither is $Q\ige 1$.
\smallskip

Statement~\refeq{TB:Vogt} is illustrated by \RefFig{ParabDemo} 
for decreasing curvature (\LT{\sigma\ieq\alpha{+}\beta}).
Note that if we bring the function $\tau(t)$ to the range $[-\pi,\pi]$,
then, to preserve continuity, the value~$-\pi$, not~$+\pi$, 
should be assigned to the angle~$\alpha$
of the spiral $A_1B_1$ in  \RefFig[a]{ParabDemo}. 
This accords to~\eqref{VogtShort}.
Curvatures $k(t)$ in \RefFig{4curves} are increasing, and \GT{\sigma}.
\smallskip

For \refeq{TB:Invariance} we note that oriented angles $\psi$ and $\sigma$ 
only change sign under inversions,
wherefrom the invariance of $Q,\abs{\sigma}$ follows.
Transformation~\eqref{z2w0} is either identity (\EQ{z_0}), 
or includes both inversion and symmetry.
The type of monotonicity of curvature  (increasing/decreasing)
is swapped under inversion and restored under symmetry.
So does $\sign\sigma$.
\smallskip

Statement \refeq{TB:Lense} was proven for convex spirals in~\cite{Theorem5}
and for any short spiral in~\cite{PomiShort}. \RefFig{4curves} demonstrates it,
including non-convex case $A_2M_2B_2$.
\smallskip

\RefFig{HypQeq0} illustrates inequality \LT{Q}~\eqref{Qlt0},
for fixed angles $\alpha,\beta$ taking form
\equa{
   Q(a,b;\alpha,\beta) = 
     (a+\sin\alpha)(b-\sin\beta)+\sin^2\dfrac{\alpha+\beta}{2} < 0.
}
Angles $\alpha\ieq\Deg{45}$,
$\beta\ieq{\Deg{-15}}$ are those of the arc $A_1M_1B_1$, the point 
$K\ieq(a,b)\simeq(-1.98,\, -0.10)$
corresponds to normalized boundary curvatures of this arc.
The equation \EQ{Q(a,b)}
describes a hyperbola in the plane $(a,b)$. 
Two convex regions \LE{Q}, bounded by two branches of the hyperbola, 
cover the values of boundary curvatures, 
allowing spiral arcs to be constructed. The upper left branch,
lying in the halfplane $a\ilt b$,
corresponds to increasing curvatures. As the angles in the first example are such that
\GT{\alpha{+}\beta\ieq\Deg{30}}, 
corresponding short spirals are of increasing curvature~\eqref{VogtShort};
therefore, possible boundary curvatures $(a, b)$ of short spirals 
for this case are covered by the upper left region.

\begin{figure}[t]
\centering%
\Bfig{\textwidth}{HypQeq0}{~}
\end{figure}

The second example in \RefFig{HypQeq0} is drawn for 
$\alpha\ieq{-\Deg{180}}$, $\beta\ieq \Deg{120}$, 
the angles of the spiral $A_1B_1$ in \RefFig[a]{ParabDemo}. 
As $\sigma\ieq{\Deg{-60}}<0$, short curves are of 
decreasing curvature, $a\igt b$. Possible values of $(a,b)$ are
covered by the lower right hyperbolic region.

The following inequalities just reflect the position of two regions
in question with respect to the asimptotae of the hyperbola \cite[cor.~2.1]{PomiMain}:
\Equa{Cor21}{%
       a \lessgtr b\qquad\Longrightarrow\qquad
       a  + \sin\alpha \lessgtr0,\quad
       b  - \sin\beta\,\gtrless 0.
}
Curvatures at the center of the hyperbola 
$(a_0\ieq{-\sin\alpha},\:b_0\ieq\sin\beta)$ are those of
two circular arcs, bounding the lense.

Additional hyperbola, drawn dashed, bounds the region of applicability
of the proposed method. 
This is discussed later (Prop.\,\ref{PropQS}).

\section{Transformation of a spiral arc}

Now we investigate map~\eqref{z2w0}.
This is a particular case of general M\"obius transformation
$z\to \frac{az+b}{cz+d}$, $ad{-}bc\neq0$, 
keeping points $(-1,0)$ and $(1,0)$ intact.

\begin{prop}
\label{PropBasic}%
Let two circles of curvature \eqref{K1K2c1}  of a spiral arc, 
and two circles
\equa{
       \Kls{1} = K(-1,0,\alpha^\star,a^\star),\quad  
       \Kls{2} = K(1,0,\beta^\star,b^\star),
}
of another spiral arc are such that
\Equa{sameSigQ}{%
      \sigma(\Kl{1},\Kl{2})=\sigma(\Kls{1},\Kls{2}),\qquad
       Q(\Kl{1},\Kl{2})\ieq Q(\Kls{1},\Kls{2}).
}       
Then transformation~\refeq{z2w0} with
\Equa{z0r}{%
    z_0=\dfrac{r_0\Exp{\iu\lambda_0}-1}{r_0\Exp{\iu\lambda_0}+1},\qquad\mbox{where}\qquad
    \left\{
    \begin{array}{l}
       \lambda_0=\alpha^\star{-}\alpha = \beta{-}\beta^\star\,,\\
       r_0=\dfrac{a  + \sin\alpha}{a ^\star + \sin\alpha^\star}=
        \dfrac{b^\star - \sin\beta^\star}{b - \sin\beta}>0,
    \end{array}\right.
}
maps the fist pair of circles to the second one.
\end{prop}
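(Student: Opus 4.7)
The plan is to verify directly that the Möbius transformation $W$ determined by $z_0$ in \refeq{z0r} maps $\Kl{1}$ to $\Kls{1}$ and $\Kl{2}$ to $\Kls{2}$. Since every Möbius map sends circles to circles and preserves second-order contact, the image of each osculating circle $\Kl{i}$ is again a circle; moreover, because $W$ fixes $(\pm1,0)$, each image passes through the same point $(\pm1,0)$ and is thus fully determined by its tangent direction and signed curvature there. It therefore suffices to check these two data at each endpoint.

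First I would verify that $W$ fixes $(\pm1,0)$, and compute $W'(z)\ieq(1{-}z_0^2)/(1{+}z_0 z)^2$. Evaluation at the endpoints together with a direct substitution of \refeq{z0r} yields
\equa{
    W'(-1) = \dfrac{1{+}z_0}{1{-}z_0} = r_0\Exp{\iu\lambda_0},\qquad
    W'(1)  = \dfrac{1{-}z_0}{1{+}z_0} = r_0^{-1}\Exp{-\iu\lambda_0}.
}
By conformality the tangent rotates by $\arg W'$ at each fixed point, producing image tangents $\alpha{+}\lambda_0$ and $\beta{-}\lambda_0$. These match the prescribed $\alpha^\star,\beta^\star$ with a common $\lambda_0$ iff $\alpha{+}\beta\ieq\alpha^\star{+}\beta^\star$, which is the $\sigma$-invariance half of hypothesis \refeq{sameSigQ}.

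For the curvature I would invoke the standard formula for signed curvature under a holomorphic map,
\equa{
    k^\star = \dfrac{1}{\abs{W'(z)}}\Bigl[\,k + \mathop{\rm Im}\nolimits\Bigl(\dfrac{W''(z)}{W'(z)}\Exp{\iu\tau}\Bigr)\Bigr],
}
with $W''(z)/W'(z)\ieq{-2z_0}/(1{+}z_0 z)$. Short algebraic reductions based on \refeq{z0r} yield $-2z_0/(1{-}z_0)\ieq 1-r_0\Exp{\iu\lambda_0}$ and $-2z_0/(1{+}z_0)\ieq r_0^{-1}\Exp{-\iu\lambda_0}-1$. Substituting these, together with $\alpha^\star\ieq\alpha{+}\lambda_0$ and $\beta^\star\ieq\beta{-}\lambda_0$, collapses the two curvature-matching conditions to
\equa{
    r_0\,(a^\star + \sin\alpha^\star) = a + \sin\alpha,\qquad
    r_0^{-1}(b^\star - \sin\beta^\star) = b - \sin\beta,
}
precisely the two ratios appearing in \refeq{z0r}.

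It remains to check that the two ratios give the same value and that this value is positive. Cross-multiplying turns equality into $(a{+}\sin\alpha)(b{-}\sin\beta)\ieq(a^\star{+}\sin\alpha^\star)(b^\star{-}\sin\beta^\star)$, which, after cancelling the common $\sin^2(\sigma/2)$ (legitimate by $\sigma\ieq\sigma^\star$), is exactly $Q\ieq Q^\star$, the remaining hypothesis. Positivity follows from \refeq{Cor21}: both spirals share the sign of $\sigma$ and hence the same curvature-monotonicity type, so numerator and denominator in each ratio carry matching signs. I expect the main obstacle to be the curvature-transformation computation and the two simplifications of $-2z_0/(1{\mp}z_0)$ in terms of $r_0$ and $\lambda_0$; once those are in place, the proposition reduces to a direct coefficient matching.
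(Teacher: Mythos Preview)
Your proposal is correct and follows essentially the same route as the paper: verify directly that $W$ fixes $\pm1$, then check that the tangent angle and signed curvature of each boundary circle transform as required, using the identity $(1{+}z_0)/(1{-}z_0)=r_0\Exp{\iu\lambda_0}$. The only packaging differences are that the paper carries out the curvature computation from scratch via an explicit arc-length parametrization of $\Kl{1}$ rather than quoting the conformal curvature formula, and it handles $\Kl{2}$ by invoking the invariance of $\sigma$ and $Q$ under~\refeq{z2w0} instead of repeating the direct calculation you outline for the second endpoint.
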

\begin{proof}[{\bf Proof.}]
Conditions \eqref{sameSigQ} can be written as
\Equa{SameSQ2}{%
    \begin{array}{rl}
     &\alpha+\beta=\alpha^\star+\beta^\star,\\
    &(a + \sin\alpha)(b - \sin\beta)=
     (a^\star + \sin\alpha^\star)(b^\star - \sin\beta^\star);
    \end{array}
}
that's why two expressions for $\lambda_0$ and $r_0$ in \eqref{z0r} are equivalent.
Because $\sign(\alpha{+}\beta)=\sign(\alpha^\star{+}\beta^\star)$,
two spirals have the same type of monotonicity of curvature.
Numerator and denominator in $r_0$ are non-zero and of equal signs~\refeq{Cor21}.
Thus \GT{r_0} in \eqref{z0r} is justified. 

Treating $z$ in $W(z;z_0)$ as any object, 
subjected to map~\refeq{z2w0}, we denote images of circles $\Kl{1,2}$ 
as $W(\Kl{1,2};z_0)$. We have to prove that $W(\Kl{1,2};z_0)=\Kls{1,2}$.

Let $z(s)$ be arc-length parametrization of the circle~\Kl{1}, i.e.
\equa{%
       z(s)=-1+\dfrac{\iu}{a}\Exp{\iu \alpha}(1-\Exp{\iu a s}):\quad
       \D{z}{s}(s)=\Exp{\iu(\alpha+a s)},\quad
       \DD{z}{ss}(s)=\iu a\Exp{\iu(\alpha+a s)}.
}
We need not pay special attention to the straight line case \EQ{a}, because
\equa{%
        \lim_{a\to 0}z(s)=-1+s\Exp{\iu \alpha}
}
is the straight line in question.
Let $w(s)\ieq W(z(s);z_0)$ be some parametrization of the image of~$\Kl{1}$
(in general case the parameter~$s$ is not arc length for~$w(s)$).
Calculating derivatives at $z(0)\ieq w(0)\ieq {-1}$ yields:
\equa{%
   \begin{array}{l}
     \D{w}{s}(s)=\D{z}{s}(s)\dfrac{1-z_0^2}{[1+z_0z(s)]^2},\\
     \D{w}{s}(0) =\D{z}{s}(0)\dfrac{1-z_0^2}{[1+z_0 z(0)]^2}=
     \Exp{\iu\alpha}\dfrac{1+z_0}{1-z_0}=
     \Exp{\iu\alpha}\cdot r_0\Exp{\iu\lambda_0}=
     r_0\Exp{\iu\alpha^\star}.
   \end{array}  
}
As \GT{r_0}, we get required tangent $\alpha^\star$ for~$w(0)$.
To control curvature, we need the second derivative:
\equa{%
   \begin{array}{rcl}
     \DD{w}{ss}(s)&=&\DD{z}{ss}\dfrac{1-z_0^2}{[1+z_0z(s)]^2}
               -2z_0{\D{z}{s}}^2\dfrac{1-z_0^2}{[1+z_0z(s)]^3},\\
     \DD{w}{ss}(0)
         &=&\iu a\Exp{\iu\alpha}\dfrac{1+z_0}{1-z_0}
             -2z_0\Exp{2\iu\alpha}\dfrac{1+z_0}{(1-z_0)^2}=
         r_0\Exp{\iu\alpha^\star}(\iu a+\Exp{\iu\alpha}-r_0\Exp{\iu\alpha^\star}).
    \end{array}
}
We separate real and imaginary parts, 
\ $\D{w}{s}(0)=u_1+\iu v_1$, \ $\DD{w}{ss}(0)=u_2+\iu v_2$:
\equa{%
   \begin{array}{l}
    u_1  =r_0\cos\alpha^\star,\qquad  v_1  =r_0\sin\alpha^\star,\\
    u_2  = \HM{}r_0[\cos(\alpha{+}\alpha^\star)-a\sin\alpha^\star-r_0\cos(2\alpha^\star)],\\
    v_2  = -r_0[\sin(\alpha{+}\alpha^\star)+a\cos\alpha^\star+r_0\sin(2\alpha^\star)].
  \end{array}
}
Calculating the curvature of the circle~$w(s)$  at \EQ{s} proves that
$W(\Kl{1};z_0)=\Kls{1}$:
\equa{%
   \dfrac{v_2u_1-v_1u_2}{(u_1^2+v_1^2)^{3/2}}=
          \dfrac{a+\sin\alpha}{r_0}-\sin\alpha^\star=a^\star.
}

Similar calculations for the second pair of circles can be omitted.
The required values $\beta^\star$ and~$b^\star$ 
result from~\eqref{sameSigQ} and from the invariance of~$Q,\sigma$ 
(\ref{TB:Invariance}).
Namely, denote the image $W(\Kl{2};z_0)$ as $K(1,0,\bar{\beta},\bar{b})$.
Then
\vspace{-.5\baselineskip}
\equa{%
    \sigma(\,\overbrace{W(\Kl{1};z_0)}^{\Kls{1}}\,,W(\Kl{2};z_0))
     \stackrel{\mathrm{(\ref{TB:Invariance})}}{=}
      \sigma(\Kl{1},\Kl{2})
      \stackrel{\eqref{sameSigQ}}{=}\sigma(\Kls{1},\Kls{2}).
}
Thus obtained equation 
$\sigma\left(\Kls{1},W(\Kl{2};z_0)\right)=\sigma(\Kls{1},\Kls{2})$
is in fact 
$\alpha^\star{+}\bar{\beta}=\alpha^\star{+}\beta^\star$,
and yields $\bar{\beta}\ieq\beta^\star$. The same reasoning for~$Q$
yields $\bar{b}\ieq b^\star$, i.e. $W(\Kl{2};z_0)=\Kls{2}$.
\end{proof}

\section{Spiral parabolic arc}

In this section we explore a spiral parabolic arc from the viewpoint of 
its boundary conditions and invariants.
The equation~\eqref{OrigPar} with the control point $(p,q)$
can be rewritten as
\Equa{XYParab}{%
    \begin{array}{l}
      x(t)=-(1{-}t)^2+2p(1{-}t)t+t^2,\\
      y(t)=2qt(1-t),
    \end{array}\qquad q\not=0,\quad 0\le t \le 1.
}
Calculate
\equa{%
   \begin{array}{l}
      x'(t)=2(1{+}p)(1{-}t)+2(1{-}p)t,\quad  y'(t)=2q(1{-}t)-2qt,\\
      g(t)=\sqrt{{x'}^2(t)+{y'}^2(t)}%
          =2\sqrt{(1{-}t)^2 h_1^2+(1{-}t)t(h_1^2{+}h_2^2{-}4)+t^2h_2^2},
   \end{array}
}
where
\Equa{pH1H2}{%
    h_1=\abs{AP}=\sqrt{(1+p)^2+q^2}\quad\mbox{and}\quad
    h_2=\abs{PB}=\sqrt{(1-p)^2+q^2}.
}
Since $\cos\tau(t)=x'(t)/g(t)$, $\sin\tau(t)=y'(t)/g(t)$, 
the boundary angles are defined by
\Equa{ABparab}{%
     \cos\alpha=\dfrac{1+p}{h_1},\quad 
     \sin\alpha=\dfrac{q}{h_1},\qquad
     \cos\beta=\dfrac{1-p}{h_2},\quad
     \sin\beta=\dfrac{-q}{h_2},
}
and invariant $\sigma=\alpha{+}\beta$ \ by
\Equa{ABsigma}{%
   \cos\sigma=\dfrac{1-p^2+q^2}{h_1h_2},\qquad
   \sin\sigma=\dfrac{-2pq}{h_1h_2}.
}
Curvatures are
\Equa{Ktk1k2}{%
     k(t)=\dfrac{y''x'-x''y'}{g^3}=\dfrac{-8q}{g(t)^3},\qquad  
     a=k(0)=\dfrac{-q}{h_1^3} ,\quad
     b=k(1)=\dfrac{-q}{h_2^3}.
}
Invariant \eqref{Qlt0}, expressed as a function of the control point, looks like
\Equa{Qpq}{
   Q(p,q)=\dfrac{1}{2}+\dfrac{p^2+q^2-1}{2h_1h_2}-\dfrac{q^2(2p^2+2q^2+1)}{h_1^3h_2^3}.
}

\begin{prop}
Parabolic arc \refeq{XYParab} is spiral if and only if the control point $(p,q)$
satisfies inequalities
\Equa{NoVertex}{
   \begin{array}{ll}
                      &F_1(p,q)\cdot F_2(p,q)\le 0,\qquad q\not=0,\\
       \mbox{where}\;& F_1(x,y)=x^2+x+y^2,\quad F_2(x,y)=x^2-x+y^2.
   \end{array}
}  
\end{prop}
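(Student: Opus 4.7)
The plan is to reduce spirality of the arc to the absence of the parabola's vertex from the open interval $t\in(0,1)$, locate that vertex explicitly, and then recognize the two complementary conditions ``vertex before the arc'' and ``vertex after the arc'' as $F_1\le 0$ and $F_2\le 0$ respectively.

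First, I would observe from~\eqref{Ktk1k2} that $k(t)=-8q/g(t)^3$, so (with $q\ne0$) the curvature is monotone on $[0,1]$ if and only if $g(t)$ is monotone there. Since the parabola has a single vertex where $g(t)$ attains its minimum, the monotonicity of $g$ on $[0,1]$ is equivalent to the vertex parameter $t_{vx}$ lying outside the open interval $(0,1)$. So the task becomes: locate $t_{vx}$ and translate the conditions $t_{vx}\le 0$ and $t_{vx}\ge 1$ into inequalities in $(p,q)$.

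Next, I would compute $x'(t)=2(1+p)-4pt$ and $y'(t)=2q-4qt$, so that $g(t)^2/4$ is a quadratic in $t$ with positive leading coefficient $4(p^2+q^2)$ (nonzero because $q\ne0$). Differentiating and solving yields
\equa{
   t_{vx}\;=\;\dfrac{p(1+p)+q^2}{2(p^2+q^2)}\;=\;\dfrac{F_1(p,q)}{2(p^2+q^2)}.
}
Since the denominator is strictly positive, $t_{vx}\le 0\Longleftrightarrow F_1(p,q)\le 0$. For the other endpoint, $t_{vx}\ge 1$ is equivalent to $F_1(p,q)\ge 2(p^2+q^2)$, which after simplification reads $p^2-p+q^2\le 0$, i.e., $F_2(p,q)\le 0$. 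Therefore the parabolic arc is spiral precisely when $F_1(p,q)\le 0$ or $F_2(p,q)\le 0$.

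The last step is to check that this disjunction coincides with $F_1(p,q)\cdot F_2(p,q)\le 0$. The curves $F_1=0$ and $F_2=0$ are the circles of radius $1/2$ centred at $(-1/2,0)$ and $(1/2,0)$, so the closed disks $\{F_1\le 0\}$ and $\{F_2\le 0\}$ meet only at the origin, where $q=0$. Under the hypothesis $q\ne 0$, at most one of $F_1,F_2$ is nonpositive, and consequently the product $F_1F_2$ is $\le 0$ iff exactly one of them is $\le 0$, which is the required equivalence. The only step that needs care is this last disjointness argument, since without the $q\ne 0$ restriction the product condition would admit the spurious point $(0,0)$; the rest is a direct calculation.
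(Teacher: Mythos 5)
Your proof is correct, and it rests on the same core reduction as the paper's --- spirality of the arc is equivalent to the parabola's vertex lying outside the open parameter interval $(0,1)$ --- but the execution differs in a way worth noting. The paper differentiates $k(t)$ from \refeq{Ktk1k2} and writes $k'(t)$ as a nonvanishing factor times the \emph{linear} function $-(1{-}t)F_1(p,q)+tF_2(p,q)$ of $t$; for a linear function the no-sign-change condition on $[0,1]$ collapses immediately to the endpoint test $k'(0)k'(1)\ge0$, which is $F_1F_2\le0$, so the product form appears for free. You instead exploit $k=-8q/g^3$ to reduce monotonicity of $k$ to monotonicity of $g$, locate the vertex explicitly at $t_{vx}=F_1(p,q)/\bigl(2(p^2{+}q^2)\bigr)$, translate $t_{vx}\le0$ and $t_{vx}\ge1$ into $F_1\le0$ and $F_2\le0$, and then need the extra observation that the closed disks $F_1\le0$ and $F_2\le0$ meet only at the origin in order to convert the disjunction into the product $F_1F_2\le0$. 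That last step is handled correctly and is precisely where $q\ne0$ enters; your computations of $x'$, $y'$ and of the minimizer of $g^2$ also check out. The paper's route is marginally shorter because the linearity in $t$ makes the endpoint product test immediate; yours has the small advantage of exhibiting $t_{vx}$ explicitly, which makes the geometric meaning of the two alternatives (vertex before $A$ versus beyond $B$) transparent.
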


\begin{proof}[{\bf Proof.}] 
Differentiating of $k(t)$ \refeq{Ktk1k2} yields
\equa{%
   \begin{array}{l}
     k'(t)=\dfrac{192q[(p^2+q^2)(2t-1)-p]}{g(t)^5}%
          =\dfrac{192q}{g(t)^5} \left[-(1-t)F_1(p,q)+tF_2(p,q)\right],\\
     k'(0)=-\dfrac{6q}{h_1^5}F_1(p,q), \qquad k'(1)=\dfrac{6q}{h_2^5}F_2(p,q).
   \end{array}  
}
To avoid vertex to occur in $t\in(0,1)$, we require
that the derivative did not change sign
within this interval, allowing the vertex at $t\ieq0$ or $t\ieq1$:
\GE{k'(0)k'(1)}. This yields the first inequality in~\refeq{NoVertex};
the second one prevents parabola from degenerating into a straight line.
\end{proof}

\begin{figure}[t]
\centering%
\Bfig{\textwidth}{PQCircHyp}{~}
\end{figure}

\RefFig[a]{PQCircHyp} illustrates~\refeq{NoVertex}: the control point should be taken
within or on the boundary of any of two circles $F_{1,2}(x,y)=0$, but not on the $x$-axis.

\begin{prop}
The locus of control points $(p,q)$, yielding $\sigma=\const$, is
\Equa{Hwconst}{%
   H(x,y)\equiv \sin\sigma(1-x^2+y^2)+2x y\cos\sigma=0,\qquad
    \sign(xy)=-\sign\sigma.
}
\end{prop}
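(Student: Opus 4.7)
The approach is to derive equation \eqref{Hwconst} by direct elimination of the common denominator $h_1h_2$ from the formulas in \eqref{ABsigma}. First I would rewrite those formulas as $\cos\sigma\cdot h_1h_2 = 1-p^2+q^2$ and $\sin\sigma\cdot h_1h_2 = -2pq$; multiplying the first by $\sin\sigma$, the second by $\cos\sigma$, and subtracting eliminates the product $\sin\sigma\cos\sigma\cdot h_1h_2$ and yields $\sin\sigma(1-p^2+q^2) + 2pq\cos\sigma = 0$, which is exactly $H(p,q) = 0$. This establishes necessity of the equation.

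Next, to recover the sign condition, I would use that $h_1h_2 > 0$ in $\sin\sigma = -2pq/(h_1h_2)$, which gives $\sign(\sin\sigma) = -\sign(pq)$. For short spiral arcs the angular width satisfies $|\sigma|<\pi$, so $\sign(\sin\sigma) = \sign(\sigma)$, and the condition reduces to $\sign(pq) = -\sign(\sigma)$.

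For sufficiency, I would fix $(p,q)$ satisfying $H(p,q) = 0$ together with the sign condition, and denote by $\sigma'$ the actual angular width of the parabola with control point $(p,q)$. Substituting $h_1h_2\cos\sigma' = 1-p^2+q^2$ and $h_1h_2\sin\sigma' = -2pq$ into $H(p,q) = 0$ and dividing by $h_1h_2$ gives $\sin\sigma\cos\sigma' - \cos\sigma\sin\sigma' = 0$, i.e.\ $\sigma\equiv\sigma'\pmod{\pi}$. The sign condition then rules out the spurious shift by $\pi$, leaving $\sigma = \sigma'$.

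The main obstacle is this last disambiguation: one has to verify that the single condition $\sign(xy)=-\sign(\sigma)$ together with $H=0$ really distinguishes $\sigma$ from $\sigma\pm\pi$, and this relies on the implicit restriction $|\sigma|<\pi$ imposed by shortness of the spiral. Everything else reduces to straightforward algebra on \eqref{ABsigma}.
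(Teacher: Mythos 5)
Your proof is correct and takes essentially the same route as the paper, whose entire argument is the remark that the claim ``follows immediately from \refeq{ABsigma}''; you simply make explicit the elimination of $h_1h_2$ and the sign disambiguation between $\sigma$ and $\sigma\pm\pi$. One small refinement: the bound $\abs{\sigma}<\pi$ you invoke at the end need not be imported from shortness of the spiral --- it is automatic for any parabolic arc, since \refeq{ABparab} forces $\sin\alpha$ and $\sin\beta$ to have opposite signs, so $\alpha$ and $\beta$ lie in opposite half-planes and $\sigma=\alpha+\beta\in(-\pi,\pi)$.
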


\begin{proof}[{\bf Proof}] The proof follows immediately from \refeq{ABsigma}.
\end{proof} 
The locus~\refeq{Hwconst} is an equilateral hyperbola,
centered at the origin, passing through points $A$ and $B$
(\RefFig[b]{PQCircHyp}). 
The second equality in~\refeq{Hwconst}
keeps only the part of the hyperbola, lying, if \GT{\sigma}, in quadrants II,~IV. 
Finally, subarcs $AA_1$ and $B_1B$
provide control points $(p,q)$, corresponding to spiral parabolic arcs.
For \LT{\sigma} we get the picture,  symmetric about the $x$-axis,
with subarcs $AA_1$ and $B_1B$ in quadrants I,~III (\RefFig[c]{PQCircHyp}).

Both conditions \refeq{Hwconst} in polar coordinates $(\rho,\xi)$ look
like the polar equation of the hyperbola and the intervals for~$\xi$:
\Equa{HypPolar}{%
    \rho(\xi)=\sqrt{\dfrac{\sin \sigma}{\sin (\sigma-2\xi)}},\quad
   \begin{array}{lll}
       \sigma>0:& \dfrac\sigma 2-\dfrac\pi 2<\xi<0, 
                &\dfrac\pi 2+\dfrac\sigma 2<\xi<\pi;\\[8pt]
       \sigma<0:& 0<\xi<\dfrac\pi 2+\dfrac\sigma 2,  
                &-\pi<\xi<\dfrac\sigma 2-\dfrac\pi 2 .
   \end{array}
}

\begin{prop}\label{PropQS} Under conditions 
\begin{subequations}\label{QSmax}
   \begin{align}
     & 0\ilt\abs{\sigma_0}\ilt\dfrac{\pi}{2},\label{QSmaxa}\\
     & Q_0\ile Q_{max} =-\dfrac{w^6(w^2+2)}{(1-w^2)(w^2+1)^3},\qquad
         w=\sqrt[3]{\tan\dfrac{\sigma_0}{2}}.\label{QSmaxb}
   \end{align}
\end{subequations}
there exist two parabolic spiral arcs~\eqref{XYParab}, such that $Q(p,q)\ieq Q_0$
and $\sigma(p,q)\ieq \sigma_0$. 
\end{prop}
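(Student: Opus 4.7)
The plan is to restrict the function $Q(p,q)$ of \refeq{Qpq} to the hyperbola $\sigma(p,q)\ieq\sigma_0$ from the preceding proposition, and to identify $Q_{max}$ as the value of $Q$ at the endpoint $A_1$ of the spiral subarc $AA_1$. Since $Q$ tends to $-\infty$ at the opposite endpoint $A$ and is continuous, the intermediate value theorem then yields one control point $(p,q)\In AA_1$ for every $Q_0\ile Q_{max}$; the involution $(p,q)\mapsto(-p,-q)$ --- which swaps $h_1\leftrightarrow h_2$ and hence leaves both $\sigma$ (by \refeq{ABsigma}) and $Q$ (by \refeq{Qpq}) invariant while mapping $AA_1$ to $B_1B$ --- supplies the second.

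For the endpoint behaviour, a local expansion in the polar parameter $\xi$ of \refeq{HypPolar} near $\xi\ieq\pi$ (i.e.\ near $A$) gives $1+p\sim\delta\cot\sigma_0$, $q\sim\delta$ and $h_1\sim\delta/|\sin\sigma_0|$, so the last term of \refeq{Qpq} is of order $1/\delta$ and drives $Q\to-\infty$. At $A_1$, where the hyperbola meets $F_1\ieq 0$, the quantities $q^2\ieq -p-p^2$, $h_1^2\ieq 1+p$, $h_2^2\ieq 1-3p$ are all regular, and $Q(A_1)$ is finite.

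To compute $Q(A_1)$ in closed form I would eliminate $q^2$ between $F_1\ieq 0$ and \refeq{Hwconst}, obtaining the cubic
\begin{equation*}
4p^3 - 3p\sin^2\sigma_0 + \sin^2\sigma_0 \ieq 0.
\end{equation*}
With $w^3\ieq\tan(\sigma_0/2)$ one has $\sin^2\sigma_0\ieq 4w^6/(1+w^6)^2$, and the factorisation $1+w^6\ieq(1+w^2)(1-w^2+w^4)$ makes it straightforward to verify that the unique real root of the cubic in $(-1,0)$ is $p^\ast\ieq -w^2/(1-w^2+w^4)$, whence $1+p^\ast\ieq(1-w^2)^2/(1-w^2+w^4)$ and $1-3p^\ast\ieq(1+w^2)^2/(1-w^2+w^4)$. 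Using \refeq{ABparab} and \refeq{Ktk1k2} together with $F_1\ieq 0$ one expresses $(a+\sin\alpha)(b-\sin\beta)$ at $A_1$ as $3p^3/[(1+p)^{1/2}(1-3p)^{3/2}]$; substituting $p\ieq p^\ast$ and combining with $\sin^2(\sigma_0/2)\ieq w^6/(1+w^6)$ via \refeq{Qlt0} collapses everything to $-w^6(w^2+2)/[(1-w^2)(w^2+1)^3]\ieq Q_{max}$. The hypothesis $|\sigma_0|\ilt\pi/2$ is precisely what guarantees $|w|\ilt 1$, so $1-w^2\igt 0$ and $Q_{max}\ilt 0$, in agreement with the spirality requirement $Q\ilt 0$.

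The main obstacle will be the algebra of the third paragraph --- in particular, pinpointing the cubic's real root in the compact rational form $-w^2/(1-w^2+w^4)$ and then pushing the trigonometric simplification through to the specific closed form for $Q_{max}$. (If one also wants uniqueness of the solution on each subarc, an additional sign analysis of $dQ/d\xi$ establishing monotonicity of $Q$ on $(\xi^\ast,\pi)$ is needed; for the existence assertion of the proposition, the IVT argument suffices.)
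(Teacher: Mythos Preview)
Your argument is correct and follows the same overall plan as the paper: restrict $Q$ to the hyperbola $\sigma\ieq\sigma_0$, locate the boundary point $A_1$ by solving a cubic, simplify via the substitution $w^3\ieq\tan(\sigma_0/2)$, and evaluate $Q$ there. The differences are cosmetic. You work in the coordinate $p$ (your cubic $4p^3-3p\sin^2\sigma_0+\sin^2\sigma_0\ieq0$ with root $p^\ast\ieq-w^2/(1-w^2+w^4)$), whereas the paper parametrises the hyperbola by the polar angle $\xi$ and solves the equivalent cubic in $\tan\xi$, obtaining $\tan\xi_1\ieq(w^2-1)/w$; these give the same point. For the value of $Q_{max}$ you plug $p^\ast$ into \refeq{Qlt0} directly, while the paper first derives a closed form $Q(\xi;\sigma_0)\ieq f_2(\sigma_0)-\sin^3\sigma_0\,f_1(\xi)$ and evaluates it at $\xi_1$. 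The one substantive difference is that the paper computes $\diff Q/\diff\xi$ and proves $Q$ is monotone along each subarc, whereas you invoke only the intermediate value theorem; your IVT argument suffices for the existence claim of Proposition~\ref{PropQS}, but the paper's monotonicity is what underpins the explicit uniqueness in the next proposition, so you would need to supply that sign analysis anyway once you get there. Your use of the involution $(p,q)\mapsto(-p,-q)$ to produce the second solution is exactly the paper's $\xi\mapsto\xi+\pi$ in Cartesian form.
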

\begin{proof}[{\bf Proof}]
The direction of tangent to the hyperbola \EQ{H(x,y)}
at points $A$ and $B$ is $\sigma_0$ (\RefFig[b]{PQCircHyp}). Condition~\eqref{QSmaxa}
assures the existence of the non-vanishing arcs $AA_1$ and $BB_1$
within limiting circles~\eqref{NoVertex}.

Consider the behavior of the invariant $Q(p,q)$ while the control point moves 
along the path $AA_1$ (or $BB_1$).  
$Q(p,q)$ can be considered as the function $Q(\xi)$
of the polar angle. 
To get it, we first express the product~$h_1h_2$  from \eqref{ABsigma}:
\equa{%
     h_1h_2= \dfrac{-2pq}{\sin\sigma_0}=
     \dfrac{-2\rho^2(\xi)\sin\xi\cos\xi}{\sin\sigma_0}=
     \dfrac{\sin 2\xi}{\sin (2\xi{-}\sigma_0)}.
}
Substituting $p^2+q^2\ieq \rho^2(\xi)$, $q^2\ieq \rho^2(\xi)\sin^2\xi$, and the above
product into~\eqref{Qpq}, we obtain
\Equa{Qxi}{%
    \begin{aligned}
      Q(\xi;\sigma_0)=&f_2(\sigma_0)  - \sin^3\sigma_0 f_1(\xi),\\
      \mbox{where}\quad&f_2(\sigma)=\cos^3\sigma-\dfrac{3}{2}\cos\sigma+\dfrac{1}{2},\\
      \mbox{and}\quad&f_1(\xi)=\dfrac{1+4\cos^2\xi-8\cos^4\xi}{8\cos^3\xi\sin\xi}
              =\dfrac{\tan^4\xi +6\tan^2\xi-3}{8\tan\xi}.
    \end{aligned}
}
$Q(\xi;\sigma_0)$ is a monotone function of $\xi$:
\equa{%
   \Dfrac{Q}{\xi}=\dfrac{-3\sin^3\sigma_0}{8\cos^4\xi\sin^2\xi}
}
As $\xi$ decreases from $A$ to $A_1$ and farther to the asymptota of the hyperbola, 
$Q$ increases 
\equa{%
          \mbox{from}\quad
    \lim_{\xi\to{\pi-0}}Q(\xi;\sigma_0)=-\infty
    \qquad\mbox{to}\qquad
    \lim_{\xi\to{\frac\pi2+\frac{\sigma_0}2+0}}Q(\xi;\sigma_0)=1\,,
}
remaining still negative at $A_1$, because parabolic arc is still spiral~\eqref{Qlt0}.
We denote this value as $Q_{max}$. To calculate it, find 
points $A_1,B_1$ by intersecting the pair of circles \EQ{F_1(p,q)F_2(p,q)}
with the hyperbola, i.e. from the equations 
\equa{%
     (p^2+q^2)^2-p^2=0,\quad p=\rho(\xi)\cos\xi,\quad q=\rho(\xi)\sin\xi.
}
They are simplified to $\rho^2(\xi)=\cos^2\xi$ and, finally, to
\equa{%
  \tan\xi\left(\tan^3\xi\tan\dfrac{\sigma_0}2+
                3\tan\xi\tan\dfrac{\sigma_0}2+1{-}\tan^2\dfrac{\sigma_0}2\right)=0.
}
We ignore the root \EQ{\tan\xi} (intersection points $A,B$),
and substitute $\tan\dfrac{\sigma_0}2=w^3$:
\equa{%
    \begin{array}{ll}
       &w^3\tan^3\xi+3 w^3\tan\xi+1 -w^6=0\\
       \Longrightarrow&\left(w\tan\xi  +1{-}w^2\right)%
         \left[w^2\tan^2\xi+w(w^2-1)\tan\xi+ w^4{+}w^2{+}1 \right]=0.
    \end{array}
}
The unique real root $\tan\xi_1=\frac{w^2-1}{w}$ gives both points $A_1$ and $B_1$.
Calculating~$Q(\xi_1;\sigma_0)$ yields the expression~\refeq{QSmaxb} for~$Q_{max}$.
\end{proof} 

\Wfigr{12}{.24\textwidth}{Qmax}{~}
The plot $Q_{max}(\sigma)$  is shown in \RefFig{Qmax}.
The region of curvatures, rejected by condition~\eqref{QSmaxb},
was shown in \RefFig{HypQeq0} as the band-like zone between the line \EQ{Q(a,b)}
and the dashed line $Q(a,b)\ieq Q_{max}$.
The narrower is the lense, the narrower is the rejection region
$Q_{max}\ilt Q \ile 0$.
In other words, our method is not applicable,
when boundary circles of curvature  are close to tangency.
\medskip

Now we find the explicit solution~$\xi\ieq\xi_0$ for the equation 
$Q(\xi;\sigma_0)=Q_0$,
i.e. find the control points on hyperbola~\eqref{HypPolar} with $\sigma\ieq\sigma_0$,
yielding the parabolic arc with predefined value
of invariants $\sigma_0,\,Q_0$.

We denote for brevity
\Equa{DefQ1}{%
     Q_1=\dfrac{Q_0-f_2(\sigma_0)}{\sin^3\sigma_0}\,,\qquad
     m=\sqrt[3\:]{1+Q_1^2}\,,\qquad 
     n=\sqrt{m^2{+}m{+}1}\,,
}
and
\equa{%
    \begin{array}{l}
       r_1=\sqrt{m-1}=\dfrac{\abs{Q_1}}{n}\,,\\
       r_2=\sqrt{2n-(m{+}2)}=\dfrac{m\sqrt3}{\sqrt{2n + m{+}2}}\,,\\
       r_{12}=r_2^2-r_1^2=2n-(2m{+}1)=\dfrac{3}{2n+2m+1}\,.
    \end{array}
}
Each of the above definitions for $r$'s is represented
in the alternative form, intended to avoid loss of precision while subtracting 
close positive numbers in calculations. For the same reason, the expression
for $\theta_0$ is splitted in~\eqref{TxiSol},
and $r_2{-}r_1$ is replaced by $\frac{r_{12}}{r_1+r_2}$.

\begin{prop} 
Solutions $\xi_0, \bar{\xi}_0$ of the equation $Q(\xi;\sigma_0)\ieq Q_0$, 
and corresponding control points $(p_{1,2},q_{1,2})$ are given by
\vspace{-.5\baselineskip}
\Equa{TxiSol}{%
  \begin{array}{l}
    \:\theta_0=-r_1\sign Q_1-r_2\sign\sigma_0=\left\{%
       \begin{array}{ll}
          -\,\dfrac{r_{12}}{r_1+r_2}\sign \sigma_0,
            &\mbox{if}\quad\sigma_0 Q_1<0;\\[8pt]
          -(r_1+r_2)\sign \sigma_0,&\mbox{if}\quad\sigma_0 Q_1\ge 0;
       \end{array}\right.\\[12pt]     
    \begin{array}{ll}
      \xi_{0}=\arctan \theta_0,  &
         (p_1,q_1)=\left(\rho(\xi_0)\cos\xi_0,\rho(\xi_0)\sin\xi_0\right),\\[4pt]
      \bar{\xi}_{0}=\xi_{0}{+}\pi, &
         (p_2,q_2)=(-p_1,-q_1).
     \end{array}
  \end{array}
}
\end{prop}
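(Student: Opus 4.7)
My plan is to reduce the equation $Q(\xi;\sigma_0)=Q_0$ to a polynomial equation in $\theta=\tan\xi$, factor that polynomial explicitly, and then pick out the correct real root using the sign restriction on $\tan\xi$ forced by~\eqref{HypPolar}.

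First, using~\eqref{Qxi} together with the definition of $Q_1$ in~\eqref{DefQ1}, the equation $Q(\xi;\sigma_0)=Q_0$ rewrites as $f_1(\xi)=-Q_1$. Clearing the denominator in $f_1$ and substituting $\theta=\tan\xi$ produces the quartic
\equa{
   \theta^4+6\theta^2+8Q_1\theta-3=0.
}

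The main step is to factor this as a product of two monic quadratics $(\theta^2+a\theta+b)(\theta^2-a\theta+c)$. Matching coefficients yields $b+c=6+a^2$, $bc=-3$, and $a(c-b)=8Q_1$; eliminating $b$ and $c$, the variable $u=a^2$ satisfies $u[(6+u)^2+12]=64Q_1^2$, which collapses to the perfect cube
\equa{
    (u+4)^3=64(1+Q_1^2).
}
Hence $u=4(m-1)=4r_1^2$ with $m,n$ as in~\eqref{DefQ1}, so $a=2r_1$; back-substitution then gives $b,c=(2m+1)\mp 2n$, the sign being pinned down by $a(c-b)=8Q_1$ as $c-b=4n\sign Q_1$. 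Using $r_1^2-(2m+1)=-(m+2)$ and $r_2^2=2n-(m+2)$, exactly one of the two quadratic factors has nonnegative discriminant, and its real roots are
\equa{
    \theta=-r_1\sign Q_1\pm r_2.
}

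To pick the correct root I would invoke~\eqref{HypPolar}: a direct quadrant check on both admissible subarcs $AA_1$ and $BB_1$ shows $\sign\tan\xi=-\sign\sigma_0$ throughout. Selecting the root of this sign gives $\theta_0=-r_1\sign Q_1-r_2\sign\sigma_0$, and the two sub-cases $\sigma_0Q_1\ge 0$ and $\sigma_0Q_1<0$ simplify (using $r_1-r_2=-r_{12}/(r_1+r_2)$) to the two expressions in~\eqref{TxiSol}. The second solution $\bar\xi_0=\xi_0+\pi$ corresponds to the central symmetry $(p,q)\mapsto(-p,-q)$ that swaps the two branches of the hyperbola~\eqref{Hwconst}, and produces the second control point $(p_2,q_2)=(-p_1,-q_1)$.

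The main obstacle I expect is spotting the perfect-cube identity $(u+4)^3=64(1+Q_1^2)$: that is the miracle which makes the resolvent cubic elementary and the whole quartic solvable in closed form with real radicals. Without it one is left staring at a generic quartic in $\theta$; once the collapse is recognized, the remainder is essentially careful sign bookkeeping driven by~\eqref{HypPolar}.
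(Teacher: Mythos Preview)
Your proof is correct and essentially identical to the paper's: reducing to the quartic $\theta^4+6\theta^2+8Q_1\theta-3=0$, collapsing the resolvent to the perfect cube $(u+4)^3=64(1+Q_1^2)$, extracting the two real roots $-r_1\sign Q_1\pm r_2$, and selecting the one with $\sign\theta=-\sign\sigma_0$. The paper names your factoring step explicitly as the Descartes--Euler method and derives the sign condition from~\eqref{ABsigma} rather than the quadrant intervals in~\eqref{HypPolar}, but these are cosmetic differences only.
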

\begin{proof}[{\bf Proof}]
Equation~\refeq{Qxi}, namely $Q_0=f_2(\sigma_0)-\sin^3\sigma_0 f_1(\xi)$,
can be transformed to
\Equa{TxiEq}{%
     \theta^4+6\theta^2+8Q_1\theta-3=0,\quad\mbox{with}\quad \theta\ieq\tan\xi,
}
and $Q_1$, defined by \eqref{DefQ1}.
Following Descartes--Euler's method, 
find the cubic resolvent of \eqref{TxiEq} and its roots:
\equa{%
    (z+4)^3-64(1+Q_1^2)=0, \quad 
    \begin{array}{l}
      z_{1\hphantom{,3}}=-4+4m,\\
      z_{2,3}=-4+4m\Exp{\pm\frac{2}{3}\iu\pi}\,.
    \end{array}
}
Now one should choose signs for $\zeta_i=\pm\sqrt{z_i}$ to satisfy
$\zeta_1\zeta_2\zeta_3=-8Q_1$.
We get it, assuming
\equa{%
   \begin{array}{l}
      \zeta_{1\hphantom{,3}}=-\sign Q_1 \sqrt{z_1}=-2r_1\sign Q_1,\\
      \zeta_{2,3}=+\sqrt{z_{2,3}}%
          =r_2\pm {\iu}\sqrt{2n+m{+}2},
   \end{array}
}
thus obtaining 
$\zeta_1\zeta_2\zeta_3=-8\sign Q_1 \sqrt{m^3-1}=-8\sign Q_1\cdot \abs{Q_1}=-8Q_1$.
So, both real roots of \eqref{TxiEq} are equal to
\equa{
      \theta_{1,2}=\dfrac{1}{2}\zeta_1\pm\dfrac{1}{2}(\zeta_2{+}\zeta_3)
       =-r_1\sign Q_1\pm r_2.
}
and are of opposite signs:
\equa{
   \theta_1 \theta_2=r_1^2-r_2^2=-r_{12}<0.
}
From \eqref{ABsigma} we note that 
\equa{%
  \sin\sigma_0=-\dfrac{2\rho^2(\xi)\sin\xi\cos\xi}{h_1h_2}
  \qquad\Longrightarrow\qquad 
  \sign\sigma_0=-\sign\tan\xi=-\sign \theta.
}
So, the only admissible root~$\theta_0$ from $\theta_{1,2}$ is of the sign, opposite 
to that of~$\sigma_0$:
\equa{%
   \begin{array}{rl}
     \mathrm{if}\quad\sigma_0>0,\; Q_1 < 0: \:& \theta_0=\HM{}r_1-r_2<0;\\
     \mathrm{if}\quad\sigma_0>0,\; Q_1\ge0:   & \theta_0=-r_1-r_2<0;\\
     \mathrm{if}\quad\sigma_0<0,\; Q_1\le0:   & \theta_0=\HM{}r_1+r_2>0;\\
     \mathrm{if}\quad\sigma_0<0,\; Q_1 > 0:   & \theta_0=-r_1+r_2>0.
   \end{array}
}
This selection is unified in~\eqref{TxiSol}.
\end{proof}

Thus found $\xi=\arctan\theta_0$ provides two symmetric parabolas, 
shown in \RefFig[c]{PQCircHyp}.

\section{Algorithm}

Now we summarize step by step the construction of a short spiral arc
with predefined boundary conditions.
\smallskip

\begin{figure}[th]
\centering%
\Bfig{.84\textwidth}{Algorithm}{~}%
\end{figure}

\noindent{\bf Step 1.\,} Transformation to the chord's coordinate system.

Given boundary conditions $\Kl{M}=K(x_1,y_1,\tau_1,k_1)$ at the startpoint~$M$
(\RefFig[a]{Algorithm}),
and $\Kl{N}=K(x_2,y_2,\tau_2,k_2)$ at the endpoint~$N$,
transform them by orthogonal transformation to
\equa{%
  \Kls{1} = K(-1,0,\alpha^\star,a^\star),\qquad
  \Kls{2} = K( 1,0,\beta^\star,b^\star),
}
where
\equa{%
\begin{array}{ll}
   \alpha^\star=\tau_1-\mu,\;&a^\star=k_1c\\
   \beta^\star=\tau_2-\mu,   &b^\star=k_2c.
\end{array}\qquad
\begin{array}{l}
    c=\dfrac12\sqrt{(x_2{-}x_1)^2+(y_2{-}y_1)^2},\\
    \mu = \arg\left[(x_2{-}x_1)+\iu(y_2{-}y_1)\right],
\end{array}
}
The transformed configuration is shown in \RefFig[b]{Algorithm}.
\medskip

\noindent{\bf Step 2.\,} 
Check solvability of the problem. 
\begin{itemize}
\item Calculate $Q_0=Q(\Kls{1},\Kls{2})$ and check inequality~\refeq{Qlt0}.
If \GT{Q_0}, boundary conditions are invalid, irrespective of the proposed method:
no such spiral exists. If \EQ{Q_0}, biarc is unique spiral.
Continue, if \LT{Q_0}.

\item As specified in~\eqref{VogtShort}, bring the boundary angles 
to the halfintervals $(-\pi,\pi]$ or $[-\pi,\pi)$. Namely, 
if $\abs{\alpha^\star}\ieq\pi$ or $\abs{\beta^\star}\ieq\pi$, 
replace $+\pi$ by $-\pi$ for the case of decreasing curvature. 
Calculate $\sigma_0=\alpha^\star{+}\beta^\star$, check condition~\refeq{VogtShort}. 
If it fails, no short spiral exists.

\item Verify conditions \eqref{QSmax}.
If $Q_0>Q_{max}$ or $\abs{\sigma_0}\ge\pi/2$, the proposed algorithm is not applicable.
\end{itemize}

\noindent{\bf Step 3.\,} 
Find spiral parabolic arc~\refeq{XYParab} such that
its boundary tangents $\alpha, \beta$ and curvatures $a, b$
satisfy equations~\refeq{SameSQ2}.
Two solutions, $(p_{1,2},q_{1,2})$, 
are explicitly given above~\eqref{TxiSol}.
The explicit expression for~$Q_1$ in~\eqref{DefQ1} is
\equa{%
     Q_1=\cot\sigma_0+
         \dfrac{(a^\star + \sin\alpha^\star)(b^\star - \sin\beta^\star)}%
              {\sin^3\sigma_0}.
}
Two parabolic arcs are shown in \RefFig[c,d]{Algorithm} by dotted lines.
\medskip

\noindent{\bf Step 4.\,}
Apply this step to each of two 
just constructed parabolic arcs. 

Calculate boundary tangents $\alpha,\, \beta$ from~\refeq{ABparab}
and curvatures $a,\, b$  from~\refeq{Ktk1k2}.
Define $r_0$, $\lambda_0$ from~\eqref{z0r}.
Transforming the parabolic arcs by~\refeq{z2w0}, get the sought for
rational curve:
\equa{%
  \begin{split}
    X(t)+\iu Y(t)
       &=\dfrac{x_0(x^2{+}y^2{+}1)+x(1{+}x_0^2{+}y_0^2)+
         \iu[y(1{-}x_0^2-y_0^2)+y_0(1{-}x^2{-}y^2)] }%
         {1+2x x_0-2y y_0+(x^2+y^2)(x_0^2+y_0^2)}=\\
       &=\dfrac{r_0^2 l_1^2-l_2^2 + %
            2\iu r_0[2y\cos\lambda_0-(x^2+y^2-1)\sin\lambda_0]}%
           {r_0^2 l_1^2
            -2r_0[2y\sin\lambda_0+(x^2+y^2-2)\cos\lambda_0]
            +l_2^2},
  \end{split}  
}
where $x(t),y(t)$ are abbreviated to $x,y$, and
$l_1^2=(x{+}1)^2+y^2$,  $l_2^2=(x{-}1)^2+y^2$.
The second expression is tolerant to the case $z_0\ieq\infty$,
which is described by finite values $r_0,\,\lambda_0$.

Two solutions are shown combined in \RefFig[e]{Algorithm}.
\medskip

\noindent{\bf Step 5, final.\,}
Return to the original coordinate system (\RefFig[f]{Algorithm}).

\begin{figure}[t]
\centering%
\Bfig{.8\textwidth}{AlgKs}{~}%
\\[3\baselineskip]%
\hspace{.1\textwidth}\Bfig{.5\textwidth}{ParabLong}{~}%
\hfill
\Bfig{.175\textwidth}{ExaLemV}{~}\hspace{.1\textwidth}
\end{figure}

\section{Illustrations}

In \RefFig{Algorithm} transition curves, joining two concentric circles,
have been constructed. \RefFig{AlgKs} adds the curvature plots 
with respect to arc length for both solutions.

In \RefFig{ParabLong} the boundary circles are also concentric,
but the tangents at points $A$ and $B$ 
are such that no short joining spiral exist:
condition~\refeq{VogtShort} is violated. 
To construct a long one, we introduce the point~$M$
at the polar halfway from $A$ to $B$, and new intermediate circle of curvature,
concentric to both given ones.
Two shorts spirals are constructed on two chords, $AM$ and $MB$.
Combining them together, we obtain four variants
of the long spiral $AMB$, 
and the curvature remains continuous at~$M$. In particular,
taking radius $R_{_M}=\sqrt{R_{_A}R_{_B}}$, we get similarity of boundary conditions.
Arcs~$MB$ can be obtained just by rotation and homothety, applied to both arcs~$AM$.
\smallskip

\begin{figure}[t]
\centering%
\Bfig{.9\textwidth}{Cornu7}{~}%
\end{figure}

In \RefFig{ExaLemV} symmetric boundary conditions are given:
tangents are parallel, 
and $\kA< 0<\kB=\abs{\kA}$.
Curvature changes sign, inflection point on the transition
curve is therefore required. It does appear in both solutions.

One could expect a symmetric solution, but, instead, we get
the symmetry among two solutions.
The problem could be resolved, as in the previous example,
by introducing the intermediate inflection point at the coordinate origin.
\smallskip


In \RefFig{Cornu7} the long arc $C_0C_7$ of Cornu spiral
is drawn by dots and subdivided into 7 short subarcs, $C_iC_{i+1}$. 
The boundary conditions $(x_i,y_i,\tau_i,k_i)$ 
are borrowed from those of Cornu spiral at points $C_i$.
In this example every point $C_i$ is taken almost as far from $C_{i-1}$ as
limitation  \eqref{QSmaxb} allows.
As before, we get two solutions on every segment $C_iC_{i+1}$.
Differences between two solutions and Cornu spiral itself
are well visible on the segment $C_0C_1$.
This can be avoided, as shown on the right side, by inserting
an intermediate point $C_{1/2}$ between $C_0$ and~$C_1$.

The curvature plots with respect to arc length
for approximations of Cornu spiral are also shown in \RefFig{Cornu7}
for both solutions on every segment of the curve $C_0C_7$.
Abscissas of vertical lines correspond to arc length of Cornu spiral in points~$C_i$.
Small vertical marks separate arcs of approximating curves. 
Proportionality $k(s)\sim s$ of Cornu spiral is well reproduced in approximations.

\begin{figure}[t]
\centering%
\Bfig{.9\textwidth}{InvHyp2}{~}%
\end{figure}

\section{Conclusions}

Note that we get much more flexibilty in form control
by involving spiral arcs of other conics or other spirals:
basic Proposition~\ref{PropBasic} is independent
of the kind of spiral involved.
Arcs within a quarter of an ellipse are spirals,
and, subjected to transformation~\eqref{z2w0}, provide another variety of spiral arcs. 
Being 2-nd order rationals,
they also yield 4-th order rational spirals. So do hyperbolic arcs.

\RefFig{InvHyp2} shows inversions of a half-hyperbola,
traced from the vertex at point $A$ through infinity to the second vertex at~$B$.
Several inversions of this branch of hyperbola are shown,
every example with the circle of inversion.
Inversion with respect to the unit circle produces a curve $AOB$,
known as hyperbolic lemniscate.
Although the original curve
has discontinuity at infinity, this feature disappears under
inversion. The infinite point goes to the center
of inversion and becomes an ordinary,  infinitely differentiable,
point of the curve-image.

Varying eccentricity of conic gives additional possibilities
in constructing rational spirals. 
A {\em family} of curves is expected as a solution.
Starting with hyperbola, one could get a desired symmetric solution 
for the symmetric problem in \RefFig{ExaLemV}.

Preliminary investigation shows
that the region of boundary conditions~\refeq{QSmax}
can be extended to $\abs{\sigma_0}\ile\pi$ and \LT{Q_0}.
Boundary angles of the lemniscale $AOB$ are $\alpha\ieq\beta\ieq\pi/2$, 
its lense is the unit circle, 
and the width of the lense is $\sigma\ieq\pi$.
Taking eccentricity $e\gg 1$, two branches can be made anyhow close to parallel lines
(\RefFig[b]{InvHyp2}).
The image can be anyhow close to a biarc curve, and $Q$ close to~0.

In this article we have considered parabolic arcs only,
thus illustrating the simplest, but sufficiently powerful,
version of the proposed method.


\newcommand{\POMI}{Zapiski nauch. sem. \hbox{POMI}}
\vfill  

\end{document}